\def\QQ{\ensuremath{\mathbb Q}}
\def\Qp{{\ensuremath{\QQ_p}}}
\def\ZZ{\ensuremath{\mathbb Z}}
\def\cH{\ensuremath{\mathcal H}}
\def\CC{\ensuremath{\mathbb C}}
\def\Cp{\ensuremath{\CC_p}}
\def\Zp{\ensuremath{\ZZ_p}}
\def\RR{\ensuremath{\mathbb R}}
\def\cR{\ensuremath{\mathcal R}}
\def\cB{\ensuremath{\mathcal B}}
\def\go{\ensuremath{\mathfrak o}}
\def\gO{\ensuremath{\mathfrak O}}
\def\l{\ell}
\DeclareMathOperator{\Fil}{Fil}
\def\cD{\ensuremath{\mathcal D}}
\def\Db#1#2{\cD_{\infty,#1}(#2)}
\def\Zi{H^1_{Iw}}
\def\Zd{H^2_{Iw}}
\def\cL{\ensuremath{\mathcal L}}
\def\cLV{\cL_{V,r^\ast}}
\def\Quo{\Db{f}{\cD}/\cLV\left(\cH(G_\infty) \otimes \Zi(K,V)\right)}
\def\norm#1#2{\mid\!\mid #1 \mid\!\mid_{#2}}
\def\abs#1#2{\mid #1 \mid_{#2}}
\def\hn#1{{\cH^{#1}}}
\DeclareMathOperator{\mydet}{det}
\theoremstyle{plain}
\newtheorem{thmb}{Théorème}
\newtheorem{propb}[thmb]{Proposition}
\title{Note sur les diviseurs élémentaires du régulateur d'Iwasawa}
\author[*]{Bernadette Perrin-Riou}
\affil[*]{Université Paris-Saclay, CNRS, Laboratoire de mathématiques d'Orsay, 91405, Orsay, France.}
\date{\today}
\begin{document}
\maketitle
\selectlanguage{english}
\begin{abstract}
The article revisits a result of \cite{LLZ} concerning the structure
of the image by the Iwasawa regulator map
of the Iwasawa module associated with a semi-stable $p$-adic representation
on an unramified finite extension of $\Qp$ and gives a direct proof
based on the results of \cite{jams} in the crystalline case
and \cite{semi-stable} in the semi-stable case.
\end{abstract}
\selectlanguage{french}
\begin{abstract}
On reprend les résultats de \cite{LLZ}
concernant la structure de l'image par l'application régulateur d'Iwasawa
du module d'Iwasawa associée à une représentation $p$-adique
semi-stable d'une extension finie non ramifiée de $\Qp$
et on en donne une démonstration directe s'appuyant sur les résultats de \cite{jams}
dans le cas cristallin et de \cite{semi-stable} dans le cas semi-stable.
\end{abstract}

Soit $p$ un nombre premier impair,
$\QQ_{p,n}=\Qp(\mu_{p^n})$ le corps de définition sur $\Qp$ des racines $p^n$-ièmes
de l'unité pour $n\geq 0$, $\QQ_{p,\infty}=\Qp(\mu_{p^\infty})$ la réunion des
$\QQ_{p,n}$
et $G_\infty$ le groupe de Galois de $\QQ_{p,\infty}/\Qp$.
Rappelons que $G_\infty = \Gamma \times \Delta$ avec $\Delta$ d'ordre $p-1$ premier à p.
Soit $\chi$ le caractère cyclotomique : $G_\infty \to \ZZ_p^\times$
et $\omega$ la projection sur $\Delta$.
On fixe un générateur $\gamma$ de $G_\infty$ :
$\tilde{\gamma}=\langle\gamma\rangle$ est un générateur de $\Gamma$ et
$\gamma = \omega(\gamma) \tilde{\gamma}$.

Notons $\cH$ l'anneau des séries convergentes sur toute boule
$B(\rho) = \{x \in \Cp \;\textrm{ tel que } \;\abs{x}{p} \leq \rho\}$
pour $\rho < 1$. Ici, $\abs{\cdot}{p}$ est la valeur absolue sur $\Cp$
normalisée par $\abs{p}{p}=p^{-1}$.
Si $I$ est un intervalle de
$\RR^+$, on définit les espaces de fonctions analytiques:
\begin{equation*}
\hn{I}=\left\{\sum_{n\in \ZZ} a_n x^n \text{ t.q. $a_n\in \Qp$ et }
\lim_{n\to \pm\infty} |a_n|r^n =0 \text{ pour tout $r\in I$}
 \right\}\ .\end{equation*}
Pour $f\in \hn{I}$ et $\rho\in I$, on pose
$\norm{f}{\rho}=\sup_{\abs{x}{}=\rho} \abs{f(x)}{} =\sup_n\abs{a_n}{}\rho^n$.
On munit $\hn{I}$ de la topologie de convergence uniforme sur l'ensemble
des $x$ tels que
$\abs{x}{p} \in J$ pour $J$ sous-ensemble fermé de $I$.
%%($[r_1,r_2)$ étant considéré comme ouvert en $r_2$).
Les espaces $\hn{I}$ sont complets.
La réunion des $\hn{[\rho,1[}$ pour $\rho\in [0,1[$ munie de la topologie de la limite
inductive est l'anneau $\cR$ de Robba. L'anneau $\cB$ introduit dans \cite{semi-stable}
est le sous-espace de $\cR$ formé des éléments $f$ d'ordre fini, c'est-à-dire tels qu'il
existe un entier $r$ tel que
$\norm{p^{rn} f}{\rho^{1/p^n}}$ tend vers $0$ pour $\rho$ entre 0 et 1 dans
le domaine de convergence.

Introduisons comme dans \cite{semi-stable} les anneaux des polynômes en $\log x$ à coefficients
dans $\cR$ ou $\cB$.
Ici, $\log x $ est une branche du logarithme, c'est-à-dire n'importe quelle
fonction localement
analytique sur $\Cp - \{0\}$ vérifiant $\log xy = \log x + \log y $
et dont la dérivée en 1 est
1. On munit ces espaces de la topologie induite par la convergence
coefficient par coefficient.
Ces espaces sont munis d'un opérateur $\varphi$ induit par $x \to (1+x)^p-1$.
Soit $\psi$ le pseudo-inverse de $\varphi$ caractérisé par
$$\varphi \circ \psi (f) = p^{-1}\sum_{\zeta \in \mu_p} f( \zeta (1 + x)-1)\ .$$
L'anneau $\cH$ est à diviseurs élémentaires, ce qui signifie que
toute matrice admet une réduction diagonale :
pour toute matrice $A$ à coefficients dans $\cH$, il existe des matrices unimodulaires $P$
et $Q$ telles que $PAQ = diag (d_1, d_2, \cdots)$
avec $(d_{i}) \subset (d_{i+1})$ (\cite{kaplansky}, \cite{lls}).
Ainsi, si $M$ est un $\cH$-module de type fini et $N$ un sous-$\cH$-module de $M$
de type fini, il existe une base $E_1, \cdots, E_d$ de $M$ et des éléments
$f_1, \cdots, f_d$ de $\cH$ vérifiant $f_d \mid \cdots \mid f_i \mid \cdots \mid f_1$
et tels que $f_1E_1,\cdots, f_d E_d$ soit un système générateur de $N$.
On note $[M : N ]= [f_1; ... ; f_d] $, suite des diviseurs élémentaires définis à une unité près.

La transformée de Mellin-Amice induite par $\gamma \mapsto (1+x)^{\chi(\gamma)}$
définit une application $\Zp[[G_\infty]] \to \Zp[[x]]$ :
$f\mapsto f\cdot (1+x)$ qui se prolonge en un homomorphisme de $\cH(G_\infty)$
dans $\cH$ dont l'image est $\cH^{\psi=0}$.
L'image de $\cH(\Gamma)$ de $\cH$ est égale à $\varphi(\cH)(1+x)$.
Le groupe $\Delta$ permute les $\varphi(\cH)(1+x)^i$ avec $i=0, \cdots, p-1$.
Si $M$ est un $\Zp[[G_\infty]]$-module et $\delta$ un caractère de $\Delta$ dans
$\ZZ_p^\times$, $M^{\delta}$ désigne la composante isotypique de $M$ relatif à $\delta$
et $e_\delta$ le projecteur associé de $M$ dans $M^\delta$.

On pose $\omega_n=(1+x)^{p^n} -1$ pour $n \geq 0$,
$\xi_n = \omega_n/\omega_{n-1} = \sum_{i=0}^{p-1} (1+x)^{i p^{n-1}}$ pour $n \geq 1$.
On fixe une unité $p$-adique $u=\chi(\tilde{\gamma})$ générateur de $1+p\Zp$.
Cela induit un unique homomorphisme d'anneaux $\chi_u : \Zp[[x]] \to \Zp$ tel que
$u =\chi_u(1+x)$.
Si $j$ est un entier et si $f \in \cH$, on note $Tw_u^{-j}(f) = f(u^{-j}(1+x)-1)$ ou
plus simplement $f^{(j)}$.
Ainsi, $\omega_n^{(j)} =u^{-jp^n}(1+x)^{p^n}-1$.
On pose
$$\l_{j}= \frac{\log u^{-j}\gamma}{\log u}=\frac{\log \gamma}{\log u} -j
\in \cH(G_\infty)\ .$$
On a donc $\chi^k (\l_{j}) =k-j$, $\chi^j (\l_{j}) =0$.
Vu comme élément de $\cH$, on a
$$\l_{j}= \frac{\log u^{-j}(1+x)}{\log u}=\frac{\log (1+x)}{\log u} -j
=Tw_u^{-j}(\log(1+x))\in \cH\ .$$
La restriction $\l_{j,\rho}$ de $\l_{j}$ à la couronne
$C_\rho=\{x\in \Cp \;\textrm{ tel que }\; \abs{x}{p} = \rho\}$
est une unité dans
$\hn{[\rho,\rho]}$ sauf si $\rho$ est de la forme $p^{-\frac{1}{p^{n-1}(p-1)}}$ avec $n\geq 1$.
Dans ce cas, $\l_{j,\rho}$ est irréductible et égal à une unité près à $\xi_n^{(j)}/p$.

Soit $K$ une extension finie non ramifiée de $\Qp$ et $\sigma$ l'homomorphisme de Frobenius associé
agissant sur $K$.
Si $\cD$ est un $\varphi$-module de dimension finie sur $K$ (c'est-à-dire un
espace vectoriel sur $K$ muni d'un opérateur $\varphi$ $\sigma$-linéaire) et
si $g \in \cH \otimes_{\Qp} \cD$, on définit $\go_\varphi(g)$ (resp.
$\gO_\varphi(g)$) comme la borne inférieure des réels $r$ tels que la suite
$\norm{p^{rn}(1\otimes \varphi)^{-n}g}{\rho_n}$ tend vers $0$ (resp. est
bornée) lorsque $n$ tend vers l'infini (avec $\rho_n=\rho^{1/p^n}$).
Cette définition s'étend à $\cB[\log(x)] \otimes \cD$ en considérant que
$\norm{(1\otimes \varphi)^{-n}\log(x)}{\rho_n} \asymp_{n\to\infty} p^{-n}$.

Soit $V$ une représentation $p$-adique semi-stable du groupe de Galois absolu de $G_K$,
de dimension finie $d$ et soit
$\cD$ le $(\varphi,N)$-module filtré associé par la théorie de Fontaine :
c'est donc un $K$-espace vectoriel de dimension $d$ muni d'un
opérateur $\varphi$, d'un opérateur $N$ nilpotent tel que $N\varphi = p \varphi N$ et
d'une filtration décroissante exhaustive et séparée $\Fil^\bullet$.
Notons $r$ un entier tel que $\cD = \Fil^{-r} \cD$
 et $r^\ast$ un entier tel que $\Fil^{r^\ast+1} \cD = 0$
(si $r^\ast=0$, les sauts de la filtration sont donc négatifs ou nuls).
Il est naturel de prendre pour $r$
(resp. $r^\ast$) le plus petit entier vérifiant $\cD = \Fil^{-r} \cD$
(resp. $\Fil^{r^\ast} \cD \neq 0$).
Notons $-r_d \leq \cdots \leq -r_i \leq \cdots \leq -r_1 $
les poids de la filtration $\cD$ comptés avec multiplicité : on a
$-r \leq -r_d$ et $-r_1 \leq r^\ast$.
Si $k$ est un poids de la filtration ($\Fil^{k}\cD \neq \Fil^{k+1}\cD)$,
la dimension de $\Fil^{k}\cD$ sur $K$ est égale au nombre d'entiers $i$ tels
que $k \leq -r_i$ et la dimension de $\Fil^{k}\cD/\Fil^{k+1}\cD$ est égale
au nombre d'entiers $i$ tels que $k=-r_i$.

Dans \cite{semi-stable}, sont associés à $\cD$
des $\cH(G_\infty)$-modules $\Db{e}{\cD} \subset \Db{f}{\cD} \subset \Db{g}{\cD}$
contenus dans $(\cB[\log x] ^{\psi=0} \otimes \cD)^{N=0}$,
de rang $d$ ($\Delta$-composante par $\Delta$-composante)
et munis d'une action de $\varphi$ induite par $1 \otimes \varphi$
(le fait que $\cD$ est relié à une représentation $p$-adique ne sert pas dans ces définitions).
Lorsque l'opérateur $N$ est nul sur $\cD$, $\Db{f}{\cD}$ est égal à
$\cH^{\psi=0} \otimes \cD$ et donc par la transformée d'Amice isomorphe à
$\cH(G_\infty) \otimes \cD$. Dans le cas où $N$ est non nul,
$\Db{f}{\cD}$ que nous ne redéfinissons pas est encore isomorphe\footnote{
Il est démontré dans \cite{semi-stable} le résultat suivant
que nous énonçons pour simplifier dans le cas où aucune valeur propre de $\varphi$
n'est une puissance de $p$.
Soit $\Theta_k = \lim_{n\to \infty} p^{kn}\psi^{n}(\log^k x) \in \cB[\log x]$ et
soit $\Theta$ l'opérateur de $\cB[\log x] \otimes \cD$ défini par
$\Theta=\sum_{k=0}^\infty \frac{(-1)^k}{k!} \Theta_k \cdot (1\otimes N)^k
=\lim_{n\to \infty}\psi^n \exp\left(-p^n \log(x) \cdot (1\otimes N)\right)$.
Il existe un unique isomorphisme $ \cH(G_\infty) \otimes \cD \to \Db{f}{\cD}$
défini (modulo les identifications du type transformée d'Amice) par
$(1-\varphi\otimes \varphi) \circ \Theta \circ (1-\varphi\otimes \varphi)^{-1}$.
}
à $\cH^{\psi=0} \otimes \cD$.

Soit $\Zi(K,V)$ le module d'Iwasawa
associé à $V$ : si $T$ est un réseau de $V$ stable par $G_\Qp$, $\Zi(K,V)$
est la limite projective des groupes de cohomologie $H^1(\QQ_{p,n}, T)$ tensorisé par $\Qp$.
Le module de torsion de $\Zi(K,V)$ est isomorphe à $V^{G_{K_\infty}}$
et $\Zd(K,V)$ est isomorphe à $V^\ast(1)^{G_{K_\infty}}$ par la dualité de Tate.
Soit $\Omega_{V,r}$ l'homomorphisme de $\cH(G_\infty)$-modules
$$\Omega_{V,r} : \Db{f}{\cD} \to \cH(G_\infty)\otimes \Zi(K,V) /V^{G_{\QQ_\infty}} $$
défini dans \cite{bpr-debut} dans le cas cristallin et dans \cite{semi-stable}
dans le cas semi-stable.

Les deux théorèmes suivants sont une conséquence de la loi de réciprocité
démontrée par Colmez (\cite{colmez}, voir aussi \cite[\S 5]{bpr-colmez} dans le cas cristallin
et \cite[5.3.6]{semi-stable} dans le cas semi-stable) et sont démontrés dans \cite[Th. 2.5.2]{jams})
(cas cristallin) et dans \cite[Th. B2 et Prop. 5.4.5]{semi-stable}) :
\begin{thmb}
Notons $\delta_{r}(\Omega_V)$ le déterminant de $\Omega_{V,r}$ calculé dans
une base du $\cH(G_\infty)$-module
$$\mydet_{\cH(G_\infty)} \Db{f}{\cD} \otimes
\left(\mydet_{\Qp\otimes\Lambda} \Zi(K,V)\right)^{-1} \otimes \mydet_{\Qp\otimes\Lambda} \Zd(K,V)\ .$$
Alors
$$ \prod_{-r^\ast \leq j < r} \l_{j} ^{\dim_{\Qp}\Fil^{-j} \cD} \delta_{r}(\Omega_V) = \cH(G_\infty)\; .$$
\end{thmb}

\begin{thmb}
\label{defg}Soit $x\in
\cH(G_\infty)\otimes \Zi(K,V)$. Il existe un unique élément $\cLV(x)$ de $\Db{f}{\cD}$ tel que
\[ \Omega_{V,r}\left(\cLV(x)\right)= \prod_{-r^\ast \leq j < r }\l_{j}.x \bmod torsion\]
et on a
$\go_\varphi\left(\cLV(x)\right)=\go(x)+r^*$ et
$\gO_\varphi\left(\cLV(x)\right)=\gO(x)+r^*$.
\end{thmb}
Il est facile de vérifier que $\cLV(x)$ ne dépend pas de $r$ vérifiant $\Fil^{-r}\cD =\cD$
et que
$$\cL_{V,r^\ast+s}(x) = \prod_{-r^\ast-s \leq j < -r ^\ast} \l_{j} \cLV(x)\; .$$
On a ainsi défini un $\cH(G_\infty)$-morphisme
$$\cH(G_\infty)\otimes \Zi(K,V) \to \Db{f}{\cD}\; .$$
Lorsque $V^{G_{K_\infty}}=0$,
$\Zd(K,V)$ est nul car isomorphe à $V^\ast(1)^{G_{K_\infty}}=0$ et
$\Zi(K,V)$ n'a pas de $\Qp\otimes \Lambda$-torsion.
\begin{propb}Si $V^{G_{K_\infty}}=0$,
l'annulateur du $\cH(G_\infty)$-module $\Quo$ est l'idéal engendré par
$\prod_{ -r^\ast \leq j < r_d}\l_{j}$.
\end{propb}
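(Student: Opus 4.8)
\medskip
\noindent\textbf{Plan of proof.}
The plan is to reduce the computation of the annihilator to the elementary divisor theory of the rings $\cH(\Gamma)$. Write $M=\Db{f}{\cD}$ and $M'=\cLV\bigl(\cH(G_\infty)\otimes\Zi(K,V)\bigr)$, so that the module in question is $Q=M/M'$. Using the hypothesis $V^{G_{K_\infty}}=0$ one has: that $\Zi(K,V)$ is torsion-free over $\Qp\otimes\Lambda$ and $\Zd(K,V)=0$; that $\cH(G_\infty)\otimes\Zi(K,V)$ is free over $\cH(G_\infty)$ of rank $n:=\dim_{\Qp}\cD$ (since $\cH(G_\infty)$ is flat over $\Qp\otimes\Lambda$ and each of its $\Delta$-components is the B\'ezout domain $\cH(\Gamma)$), the same as the rank of $M$; and that $\cLV$ is injective (if $\cLV(x)=0$, then $\prod_{-r^\ast\le j<r}\l_j\cdot x$ is torsion, hence $0$, hence $x=0$). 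Thus $M'\subseteq M$ are free of the same rank $n$, and the elementary divisor theorem yields a basis of $M$ and elements $f_n\mid\cdots\mid f_1$ of $\cH(G_\infty)$ with $Q\cong\bigoplus_{i=1}^n\cH(G_\infty)/(f_i)$; since every $f_i$ divides $f_1$, the annihilator of $Q$ is the ideal $(f_1)$, and it remains only to identify $f_1$ with $\prod_{-r^\ast\le j<r_d}\l_j$.

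For the divisibility $f_1\mid\prod_{-r^\ast\le j<r_d}\l_j$, I would use that $\cLV$ is independent of the choice of $r$ and take $r=r_d$, the least integer with $\Fil^{-r_d}\cD=\cD$. Then $\Omega_{V,r_d}\circ\cLV=\bigl(\prod_{-r^\ast\le j<r_d}\l_j\bigr)\cdot\mathrm{id}$; one checks that $\Omega_{V,r_d}$ is injective (its restriction to $M'$ is injective because $\cLV$ maps isomorphically onto $M'$ and $\prod_{-r^\ast\le j<r_d}\l_j$ is a non-zero-divisor, while $Q$ is torsion and $M$ is torsion-free), which lets one cancel $\Omega_{V,r_d}$ to get $\cLV\circ\Omega_{V,r_d}=\bigl(\prod_{-r^\ast\le j<r_d}\l_j\bigr)\cdot\mathrm{id}$ on $M$. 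Hence $\bigl(\prod_{-r^\ast\le j<r_d}\l_j\bigr)M=\cLV\bigl(\Omega_{V,r_d}(M)\bigr)\subseteq M'$, so that $\prod_{-r^\ast\le j<r_d}\l_j\in(f_1)$.

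For the reverse divisibility I would compute, in suitable bases, the determinant of $\cLV$: up to a unit it equals $\prod_{i=1}^n f_i$, and taking determinants in $\Omega_{V,r_d}\circ\cLV=\bigl(\prod_{-r^\ast\le j<r_d}\l_j\bigr)\cdot\mathrm{id}$ and using the first theorem above (with $r=r_d$, where $\Zd(K,V)=0$) to evaluate $\delta_{r_d}(\Omega_V)$, it comes out associated to $\prod_{-r^\ast\le j<r_d}\l_j^{\,\dim_{\Qp}\cD-\dim_{\Qp}\Fil^{-j}\cD}=\prod_{-r^\ast\le j<r_d}\l_j^{\,\dim_{\Qp}(\cD/\Fil^{-j}\cD)}$. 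For each $j_0$ with $-r^\ast\le j_0<r_d$ one has $\Fil^{-j_0}\cD\ne\cD$, so the exponent of $\l_{j_0}$ is positive and $\l_{j_0}$ divides $\prod_i f_i$. Passing to one $\Delta$-component, where $\cH(\Gamma)$ is a B\'ezout domain in which $\l_{j_0}$ generates a prime (even maximal, with residue field $\Qp$) ideal, this forces $\l_{j_0}\mid f_1$; and as the $\l_{j_0}$ for $-r^\ast\le j_0<r_d$ are pairwise coprime (their differences are nonzero integers, hence units in $\cH(\Gamma)$), we get $\prod_{-r^\ast\le j<r_d}\l_j\mid f_1$. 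Combined with the previous paragraph, $f_1$ is associated to $\prod_{-r^\ast\le j<r_d}\l_j$, which is the assertion.

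The step I expect to be the main obstacle is the determinant computation in the last paragraph: $\delta_{r_d}(\Omega_V)$ is the determinant of $\Omega_{V,r_d}$ taken in a basis of a determinant module (carrying a $\det\Zd(K,V)$ factor that is trivial here), not a naive matrix determinant, so it has to be unwound with care while keeping track of the $\l_j$-exponents; it is essential to work with the \emph{smallest} admissible $r=r_d$, since a larger $r$ would feed into $\det\Omega_{V,r}$ factors $\l_j$ with $j\ge r_d$ (of exponent $0$ in $\det\cLV$) that would obscure the answer. The other delicate point is the implication $\l_{j_0}\mid\prod_i f_i\Rightarrow\l_{j_0}\mid f_1$, which rests on passing to the $\Delta$-components and on the primality of the ideals $(\l_{j_0})$ recalled earlier.
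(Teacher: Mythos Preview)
Your strategy is sound and close to the paper's: both directions rest on $\Omega_{V,r_d}\circ\cLV=\bigl(\prod_{-r^\ast\le j<r_d}\l_j\bigr)\cdot\mathrm{id}$ together with Theorem~1. The paper does not introduce the elementary divisors $f_i$ for this proposition; instead it takes an arbitrary annihilator $F$, shows that the image of $\Omega_{V,r_d}$ lands in $Q\cdot\bigl(\cH(\Gamma)\otimes e_\delta\Zi\bigr)$ with $Q=\prod\l_j/F$, and then uses the determinant identity to force $Q$ to be a unit. Your route via $\mathrm{Ann}(M/M')=(f_1)$ and $\prod_i f_i\sim\det\cLV$ is a legitimate reorganisation of the same ingredients (and in fact anticipates the elementary-divisor computation of the paper's final theorem).

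There is, however, a genuine error in your last step. The element $\l_{j_0}$ does \emph{not} generate a prime (let alone maximal) ideal of $\cH(\Gamma)$. Since $\log\gamma=\log\tilde\gamma$ (the $\Delta$-part has finite order), under $\tilde\gamma\mapsto 1+T$ one has $e_\delta\l_{j_0}=\log(1+T)/\log u - j_0$, whose zeros are all the points $T=u^{j_0}\zeta-1$ with $\zeta\in\mu_{p^\infty}$; there are infinitely many, and $\cH(\Gamma)/(\l_{j_0})$ is far from $\Qp$. (The paper only asserts that the restriction $\l_{j,\rho}$ to a single annulus is irreducible \emph{there}.) So the implication ``$\l_{j_0}\mid\prod_i f_i\Rightarrow\l_{j_0}\mid f_1$'' cannot be deduced from primality.

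The repair is to work with the \emph{irreducible} factors of $\l_{j_0}$, exactly as the paper does in the last lines of its proof. Using that the $\l_j$ are pairwise coprime and that you already know $f_i\mid f_1\mid\prod_j\l_j$, write $g_{i,j}=\gcd(f_i,\l_j)$; then $g_{n,j_0}\mid\cdots\mid g_{1,j_0}\mid\l_{j_0}$ and $\prod_i g_{i,j_0}\sim\l_{j_0}^{m_{j_0}}$ with $m_{j_0}=\dim_{\Qp}(\cD/\Fil^{-j_0}\cD)\ge 1$. Now $\l_{j_0}$ has only \emph{simple} zeros (its derivative $((1+T)\log u)^{-1}$ is a unit), so for each irreducible factor $R\mid\l_{j_0}$ the valuation $v_R(g_{i,j_0})\in\{0,1\}$; together with the divisibility chain this forces $v_R(g_{1,j_0})=1$, hence $R\mid g_{1,j_0}$ for every such $R$ and therefore $\l_{j_0}\mid g_{1,j_0}\mid f_1$. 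With this fix your argument goes through.
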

On peut voir cette proposition comme une propriété de "semi-simplicité" de $\cLV$ :
l'annulateur n'a que des facteurs simples.

\begin{proof}
On déduit du théorème \ref{defg}
que si $g \in \Db{f}{\cD}$, $\left(\prod_{ -r^\ast \leq j < r_d}\l_{j} \right) g$ appartient
à l'image de $\cH(G_\infty) \otimes \Zi(K,V)$ par $\cLV$.
En effet, on a
\[ \Omega_{V,r_d}\left(\cLV(\Omega_{V,r_d}(g))\right) =
\left(\prod_{ -r^\ast \leq j < r_d}\l_{j}\right) \Omega_{V,r_d}(g)\]
et donc par injectivité de $\Omega_{V,r_d}$,
$$
\left(\prod_{-r^\ast \leq j < r_d}\l_{j}\right) g =\cLV\left(\Omega_{V,r_d}(g))\right)
\in \cLV\left(\cH(G_\infty) \otimes \Zi(K,V)\right)
\; .$$
Ainsi, $\prod_{ -r^\ast \leq j < r_d}\l_{j}$ annule $\Quo$.

Montrons maintenant que $\prod_{-r^\ast \leq j < r_d}\l_{j}$ est l'exposant
de $\Quo$. Pour cela, on travaille composante par composante de
$\cH(G_\infty) = \oplus_{\delta \in \widehat{\Delta}} e_\delta\cH(\Gamma)$.
Soit $\delta$ un caractère de $\Delta$ et
soit $F$ un élément de $\cH(\Gamma)$ annulant
$$e_\delta \left(\Quo\right)\ .$$
En prenant le pgcd de $F$ et
de $\prod_{-r^\ast \leq j < r_d}\l_{j}$ (on utilise une identité de Bezout),
on peut supposer que $F$ est
un diviseur de $\prod_{-r^\ast \leq j < r_d}\l_{j}$. Notons
$Q$ le quotient de $\prod_{-r^\ast \leq j < r_d}\l_{j}$ par $F$.
Soit $y$ un élément de $e_\delta \Db{f}{\cD}$. Il existe
$x \in e_\delta \cH(G_\infty) \otimes \Zi(K,V)$ tel que
$ F y = \cLV(x)$. Par définition de $\cLV$,
$F \Omega_{V,r_d}(y) = \left(\prod_{-r^\ast \leq j < r_d}\l_{j}\right) x$ et
$\Omega_{V,r_d }(y) = Q x$. Donc, l'image de $e_\delta \Omega_{V,r_d}$ est contenue
dans $Q \cH(\Gamma) \otimes e_\delta \Zi(K,V)$. On a
$$e_\delta \cH(G_\infty) =
\prod_{-r^\ast \leq j < r_d}\l_{j}^{-\dim_{\Qp} \Fil^{-j} \cD} e_\delta \mydet \Omega_{V,r}
\subset
\prod_{-r^\ast \leq j < r_d}\l_{j}^{-\dim_{\Qp} \Fil^{-j} \cD}
  Q^{d[K:\Qp]} e_\delta\cH(G_\infty)\ ,$$
d'où l'inclusion
$$\left(\prod_{-r^\ast \leq j < r_d}\l_{j}^{\dim_{\Qp} \Fil^{-j} \cD}\right)e_\delta \cH(G_\infty)
\subset Q^{d[K:\Qp]} e_\delta \cH(G_\infty)\; .$$
On en déduit que si $R$ est un diviseur irréductible de $Q$,
donc divisant un des $\l_{j}$ pour un entier $j$ tel que
$-r^\ast \leq j<r_d$, on a nécessairement
$\dim \Fil^{-j} \cD\geq \dim \cD$, ce qui est impossible
puisque $\Fil^{-j} \cD \neq \cD$ pour un tel $j$. Donc $Q$ est une unité dans $e_\delta \cH(G_\infty)$.
\end{proof}

\begin{propb}Supposons $V^{G_{K_\infty}}=0$.
Le déterminant de $\cLV$ calculé dans des bases de $\Db{f}{\cD}$
et de $\cH(G_\infty) \otimes \Zi(K,V)$ est le $\cH(G_\infty)$-idéal engendré par
$$\prod_{-r^\ast \leq j < r}\l_{j} ^{[K:\Qp](\dim\cD-\dim \Fil^{-j}\cD)}
\; .$$
\end{propb}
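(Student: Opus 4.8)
The plan is to read the formula off from the multiplicativity of determinants applied to the composite $\Omega_{V,r}\circ\cLV$, using the two theorems above. Write $n=\dim_{\Qp}\cD=[K:\Qp]\dim\cD$ and $L=\prod_{-r^\ast\le j<r}\l_{j}\in\cH(G_\infty)$. Since $V^{G_{K_\infty}}=0$ we also have $V^{G_{\QQ_\infty}}=0$ and $\Zd(K,V)=0$; consequently $\Zi(K,V)$ has no $\Qp\otimes\Lambda$-torsion and, by the local Euler characteristic formula, $\cH(G_\infty)\otimes\Zi(K,V)$ is a free $\cH(G_\infty)$-module of rank $n$. On the other side, $\Db{f}{\cD}$ is isomorphic, via the Amice transform and (when $N\neq0$) the isomorphism recalled in the footnote, to $\cH^{\psi=0}\otimes_{\Qp}\cD\cong\cH(G_\infty)\otimes_{\Qp}\cD$, hence is free over $\cH(G_\infty)$ of the same rank $n$. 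Fixing $\cH(G_\infty)$-bases of both, the maps $\Omega_{V,r}\colon\Db{f}{\cD}\to\cH(G_\infty)\otimes\Zi(K,V)$ (the target being exactly $\cH(G_\infty)\otimes\Zi(K,V)$, as $V^{G_{\QQ_\infty}}=0$) and $\cLV\colon\cH(G_\infty)\otimes\Zi(K,V)\to\Db{f}{\cD}$ each have a determinant, well-defined as an ideal of $\cH(G_\infty)$; I denote these ideals $\mydet\Omega_{V,r}$ and $\mydet\cLV$.

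First I record, from Theorem~\ref{defg} (the torsion submodule of $\Zi(K,V)$ being zero here), that $\Omega_{V,r}\circ\cLV$ is multiplication by $L$ on $\cH(G_\infty)\otimes\Zi(K,V)$; taking determinants over $\cH(G_\infty)$ this gives the identity of ideals
$$\bigl(\mydet\Omega_{V,r}\bigr)\cdot\bigl(\mydet\cLV\bigr)=L^{\,n}\,\cH(G_\infty)=\prod_{-r^\ast\le j<r}\l_{j}^{\,[K:\Qp]\dim\cD}\,\cH(G_\infty)\ ,$$
which already forces $\cLV$ to be injective. Next, in the present situation the determinant line of the first theorem reduces (as $\Zd(K,V)=0$ and $V^{G_{\QQ_\infty}}=0$) to $\mydet_{\cH(G_\infty)}\Db{f}{\cD}\otimes(\mydet_{\Qp\otimes\Lambda}\Zi(K,V))^{-1}$, and its generator $\delta_r(\Omega_V)$ is, up to a unit, the reciprocal of the determinant of $\Omega_{V,r}$ computed between the bases fixed above; hence the relation $\prod_{-r^\ast\le j<r}\l_{j}^{\dim_{\Qp}\Fil^{-j}\cD}\,\delta_r(\Omega_V)=\cH(G_\infty)$ of that theorem translates into
$$\mydet\Omega_{V,r}=\prod_{-r^\ast\le j<r}\l_{j}^{\,\dim_{\Qp}\Fil^{-j}\cD}\,\cH(G_\infty)\ .$$
Because $\Fil^{-j}\cD\subseteq\cD$ gives $\dim_{\Qp}\Fil^{-j}\cD\le n$ for every $j$ in the range, dividing the two displayed identities (legitimate componentwise over $\widehat{\Delta}$, each $e_\delta\cH(\Gamma)$ being a B\'ezout domain) leaves an integral ideal, namely
$$\mydet\cLV=\prod_{-r^\ast\le j<r}\l_{j}^{\,[K:\Qp](\dim\cD-\dim\Fil^{-j}\cD)}\,\cH(G_\infty)\ ,$$
the exponent equalling $[K:\Qp]\dim\cD-\dim_{\Qp}\Fil^{-j}\cD$ since $\dim_{\Qp}\Fil^{-j}\cD=[K:\Qp]\dim\Fil^{-j}\cD$. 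This is the claimed ideal; moreover the exponents vanish for $r_d\le j<r$, where $\Fil^{-j}\cD=\cD$, so the right-hand side does not depend on the chosen $r\ge r_d$, in keeping with the $r$-independence of $\cLV$.

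The only genuinely delicate point is the one packed into the second display: one must match the normalisations and verify that, when $V^{G_{K_\infty}}=0$, the element $\delta_r(\Omega_V)$ of the first theorem is indeed (up to a unit) the inverse of the determinant of $\Omega_{V,r}$ relative to $\cH(G_\infty)$-bases of $\Db{f}{\cD}$ and $\cH(G_\infty)\otimes\Zi(K,V)$, so that the ``$=\cH(G_\infty)$'' of that theorem yields $\mydet\Omega_{V,r}$ with the exponents $+\dim_{\Qp}\Fil^{-j}\cD$ rather than their negatives. The remaining ingredients --- flatness of $\cH(G_\infty)$ over $\Qp\otimes\Lambda$, the rank count for $\Zi(K,V)$, and multiplicativity of $\mydet$ --- are routine. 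Alternatively, one can avoid quoting the first theorem in this form and instead rerun, componentwise over $\widehat{\Delta}$, the inclusion argument from the proof of the preceding proposition, bounding $\mydet(e_\delta\cLV)$ from above and below; but the determinant identity above is the shortest route.
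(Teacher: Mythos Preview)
Your proof is correct and follows exactly the paper's own (one-line) argument: deduce from Theorem~\ref{defg} that $\Omega_{V,r}\circ\cLV$ is multiplication by $\prod_{-r^\ast\le j<r}\l_{j}$, take determinants, and divide by the value of $\mydet\Omega_{V,r}$ supplied by the first theorem. The paper's entire proof is the arithmetic identity $\prod_j\l_{j}^{\dim_{\Qp}\cD}\cdot\prod_j\l_{j}^{-\dim_{\Qp}\Fil^{-j}\cD}=\prod_j\l_{j}^{\dim_{\Qp}\cD/\Fil^{-j}\cD}$, which is your final display.
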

\begin{proof}
La proposition se déduit de l'affirmation sur le déterminant de $\Omega_{V,r}$ :
$$\prod_{-r^\ast \leq j < r } \l_{j} ^{\dim\cD}
\prod_{-r^\ast \leq j < r} \l_{j} ^{-\dim \Fil^{-j} \cD} =
\prod_{-r^\ast \leq j < r}\l_{j} ^{\dim\cD-\dim \Fil^{-j} \cD}
=\prod_{-r^\ast \leq j < r} \l_{j} ^{\#\{i | j < r_i\}}
\; .$$
\end{proof}

\begin{thmb}On suppose que $\cD^{\varphi=p^j} = 0 $ pour tout entier $j\in \ZZ$.
La suite des diviseurs élémentaires des $e_\delta\cH(G_\infty)$-modules
$$e_\delta\cLV\left(\cH(G_\infty) \otimes \Zi(K,V)\right) \subset e_\delta\Db{f}{\cD} $$
est
$$\left[\prod_{ -r^\ast \leq j < r_d}\l_{j} \; ; \; \cdots \ ; \;
\prod_{-r^\ast \leq j < r_i }\l_{j}\; ;\
\cdots \; ; \; \prod_{-r^\ast \leq j < r_1}\l_{j}\right]\ ,$$
chaque terme étant répété $[K:\Qp]$ fois.
\end{thmb}
\begin{proof}
Vérifions que le résultat est compatible avec la valeur du déterminant.
Le produit des $\prod_{-r^\ast \leq j < r_i}\l_{j}$ est égal à
\begin{equation*}
\prod_{i=1}^d \prod_{-r^\ast \leq j < r_i} \l_{j}=
\prod_{-r^\ast \leq j < r_d} \l_{j}^{\#\{i \mid j < r_i\}} =
\prod_{-r^\ast \leq j < r_d}\l_{j}^{\dim\cD/\Fil^{-j} \cD}
=\prod_{-r^\ast \leq j}\l_{j}^{\dim\cD/\Fil^{-j} \cD}\ ,
\end{equation*}
et $\prod_{i=1}^{r_d}\left(\prod_{-r^\ast \leq j < r_i}\l_{j}\right)^{[K:\Qp]}$
est bien égal au déterminant de $\cLV$.
Posons $\tilde{d}=[K:\Qp]d$.
Notons $[f_1; \cdots; f_{\tilde{d}}]$ la suite des diviseurs élémentaires de
$e_\delta\cLV\left(\cH(G_\infty\right) \otimes \Zi(K,V)) \subset e_\delta\Db{f}{\cD}$.
Le générateur $\prod_{-r^\ast \leq j < r_d}\l_{j}$ de son exposant
est égal à $f_1$. Soit $0 \leq \rho <1$.
Comme les $f_i$ divisent $f_1$, on peut écrire dans la couronne $C_\rho$
 $$f_{1,\rho}=u_1\prod_{ -r^\ast \leq j < r_d}
 \l_{j,\rho}^{\alpha_{j,1,\rho}} \quad, \cdots, \quad
 f_{\tilde{d},\rho} = u_{\tilde{d}}\prod_{-r^\ast \leq j < r_d} \l_{j,\rho} ^{\alpha_{j,r,\rho}}$$
avec $(\alpha_{j,s,\rho})_s$ des suites décroissantes d'entiers égaux à 0 ou 1 et
$u_s$ des unités dans $C_\rho$.
Les $\l_{j,\rho}$ sont des unités sur $C_\rho$ sauf si
$\rho=p^{-\frac{1}{(p-1)p^{n-1}}}$ pour un entier $n\geq 1$ ; dans ce cas, $\l_{j,\rho}$ est égal à une unité près
à $\xi_n(u^{-j}(1+x)-1)$ qui est $\rho$-dominant (les zéros sont
les $u^{j} \zeta -1$ pour $\zeta$ racine de l'unité d'ordre $p^n$).
Le nombre $n_{j,\rho}$ d'entiers $s$ tels que $\alpha_{j,s,\rho}=1$ suffit à déterminer
les $\alpha_{j,1,\rho}, \cdots, \alpha_{j,r,\rho}$.
En comparant à la valeur du déterminant, on a (à une unité près)
$$\prod_{s=1}^{\tilde{d}} f_{s,\rho}
\sim \prod_{-r^\ast \leq j < r_d} \l_{j,\rho}^{n_{j,\rho}}
\sim \prod_{-r^\ast \leq j < r_d}\l_{j,\rho}^{\dim_{\Qp}\cD/\Fil^{-j} \cD}\; .$$
Les $n_{j,\rho}$ sont donc bien déterminés :
$n_{j,\rho} =\dim_{\Qp} \cD/\Fil^{-j}\cD$, ainsi que les $\alpha_{j,s,\rho}$
(en particulier, ils ne dépendent pas de $\rho$).
On en déduit le théorème.
\end{proof}

\bibliographystyle{amsplain}
\selectbiblanguage{french}
\bibliography{biblio}
\end{document}